\newtheorem{theorem}{Theorem}
\newtheorem{lemma}{Lemma}
\newtheorem{prop}{Proposition}
\newtheorem{corollary}{Corollary}
\theoremstyle{definition}
\newtheorem{remark}{Remark}
\newcommand{\la}{\lambda}
\newcommand{\F}{{\Bbb F}}
\newcommand{\N}{{\Bbb N}}
\newcommand{\eps}{\varepsilon}
 \title{Paley's Theory for Lacunary Fourier Series on Discrete Groups: a Semigroup-Interpretation}
\author{Tao Mei  \footnote{Research partially supported by the NSF grant DMS-1700171.} }
\begin{document}

\maketitle
  \begin{abstract} This note interprets Paley's theory for lacunary Fourier series using semigroup-BMO and $H^1$ spaces. This interpretation allows an extension of Paley's theory  to general discrete groups, complementing the results of Rudin (\cite{Ru62})  for abelian groups with a total order, and Lust-Piquard and Pisier's work (\cite{LP91}) for  lacunary Fourier series with operator-valued coefficients.
  
\end{abstract}
 
\section*{Introduction}

Denote by ${\Bbb T}$ the unit circle. Given a lacunary sequence $  (j_k)_{ k\in{\Bbb N}}\in {\Bbb Z}$, i.e. $$\frac {|j_{k+1}|}{j_k}>1+\delta$$ for some $\delta>0$, the classical Khintchine's inequality says that 
\begin{eqnarray*}
(\sum_k |c_k|^2)^\frac12\simeq ^{c_\delta} \|\sum_k c_kz^{j_k}\|_{L^1({\Bbb T})} .
\end{eqnarray*}
This shows that $\ell_2$ embeds into $L^1$. However, the projection $$P: f \mapsto \hat f(j_k)$$ is NOT bounded from $L^1({\Bbb T})$ to $\ell_2$. Here $\hat f$ denotes for the Fourier transform of $f$. 
This can be easily seen by looking at the so-called Riesz products.
 Paley's theory is an improvement of  Khintchine's inequality. It says that,
$$(\sum_k |c_k|^2)^\frac12\simeq ^{c_\delta}\inf \{\|f\|_{L^1}; f\in L^1({\Bbb T}), {\rm supp} \hat f\subset {\Bbb N}, \hat f(j_k)=c_k,\forall k\in{\Bbb N}\}.$$
This shows that  the projection $P$ is bounded from the analytic $L^1$ to $\ell_2$, which has important applications, e.g. to Grothendieck's theory on 1-summing maps.

 Let $H^1({\Bbb T})$ be the real Hardy space on the unite circle, that consists of integrable functions  such that both their analytic and the anti-analytic parts   are integrable. 
 Paley's theory says that 
  \begin{eqnarray}\label{paley1}
  (\sum_k |c_k|^2)^\frac12\simeq ^{c_\delta}\inf \{\|f\|_{H^1}; f\in H^1({\Bbb T}), \hat f(k)=c_k,\forall k\in E\},
  \end{eqnarray}
  for $E=\{j_k, k\in{\Bbb N}\}\subset{\Bbb Z}$.
Let us call $E\subset {\Bbb N}$ a Paley set if the above equivalence holds for all $(c_k)_k\in \ell_2$.   Rudin proved that $E$ is a Paley set only if $$\sup_{n\in{\Bbb N}} \# E\cap [2^n,2^{n+1}]<C$$ which is equivalent to say that $E$ is a finite union of lacunary sequences. 
  
  By Fefferman-Stein's  $H^1$-BMO duality theory, (\ref{paley}) has an equivalent formulation that, for any $c_k\in \ell_2$,
 \begin{eqnarray}\label{paley2}
 (\sum_k |c_k|^2)^\frac12\simeq ^{c_\delta}\|\sum_k c_kz^{j_k}\|_{BMO({\Bbb T})}.
  \end{eqnarray}
  Here BMO denotes the bounded mean oscillation  (semi)norm
    $$\|g\|_{BMO}=\sup _I \frac1{|I|}\int_I |g-g_I|$$ with the supremum taking for all arc $I\in {\Bbb T}$.

 This article is  an interpretation of Paley's theory in the semigroup language and an extension  to non-abelian discrete groups. 
 Let $P_t, t>0,$ to be the Poisson integral operator that sends $e^{ik\theta}$ to $r^{|k|}e^{ik\theta}$ with $r=e^{-t}$. Here is an equivalent characterization of the  classical BMO and $H^1$-norms by $P_t$'s. That, for $f\in L^1({\Bbb T}$),
 \begin{eqnarray*}
 \|f\|_{BMO}\simeq \sup_t\|(P_t|f-P_tf|^2)\|_{L^\infty({\Bbb T})}^\frac12\\
 \|f\|_{H^1}\simeq \|(\int_0^\infty|\partial P_tf|^2tdt)^\frac12\|_{L^1({\Bbb T})}^\frac12.
 \end{eqnarray*}

Throughout this article, we consider a discrete group $G$ and a conditionally negative   length  $\psi$ on $G$.  That is to say $\psi$ is a ${\Bbb R}_+$-valued function on $G$ satisfying $\psi(g)=0$ iff $g=e$, $\psi(g)=\psi(g^{-1})$,  and
\begin{eqnarray}\label{CN}
  \sum_{g,h}\overline{a_g}a_h\psi(g^{-1}h)\leq0
\end{eqnarray}
for any finite collection of coefficients $a_g\in {\Bbb C}$ with $\sum_g a_g=0$. 
We say a sequence $h_k\in G$ is $\psi$-lacunary if there exists a constant $\delta>0$ such that
\begin{eqnarray*}
   \psi(h_{k+1}) &\geq& (1 + \delta){\psi(h_k)}  \\
 \psi(h_k^{-1}h_{k'})&\geq& \delta\max\{\psi(h_k),\psi(h_{k'})\}.
 \end{eqnarray*} 
 for any $k,k'$.   Note the second condition follows from the first one if we require $\psi$ is sub-additive, i.e. $\psi(hg)\leq C\psi(h)+\psi(g)$. Let $\la$ be the regular left representation of $G$. We say $$x=\sum_k c_k\la_{h_k}$$ is a $\psi$-lacunary Fourier series if the sequence $h_k$ is $\psi$-lacunary. We say $ x $ is a  lacunary Fourier series if there is a conditionally negative $\psi$ so that  $h_k$ is $\psi$-lacunary. 

  
   Let $$T_t:\la_g\mapsto e^{-t\psi(g)}\la_g$$ be the semigroup associated with $\psi$. We will show that, 
   
 \vskip.5cm
 \noindent
   {\bf Main Theorem.} Assume $(h_k)$ is a $\psi$-lacunary sequence. Then, for any sequence $c_k\in B(H)$,
 \begin{eqnarray*}
 \|\sum_k c_k\la_{h_k}\|_{BMO_c(\psi)}^2&\simeq^{ c_\delta }&\|\sum_k |c_k|^2 \|.\\
\inf \{\|x\|_{H^1_c(\psi)}; \hat x(h_k)=c_k,\forall k\in{\Bbb N}\}&\simeq^{ c_\delta}& tr (\sum_k |c_k|^2)^\frac12.
 \end{eqnarray*}
  
\noindent  Here the semigroup-$H^1$ and BMO-norms are defined as
  \begin{eqnarray*} \|x\|_{H^1_c(\psi)}=tr\otimes\tau (\int_0^\infty |\frac{\partial_s T_sx}{\partial s}|^2sds)^\frac12
 \\
   \|x\|_{{\rm BMO}_c(\psi)}=\sup_s \|T_s|x-T_sx|^2\|^\frac12.
  \end{eqnarray*}
  with $tr, \tau$   the canonical traces on $B(H)$ and the reduced $C^*$ algebra of $G$. One gets the $L^p$ estimate for all $1<p<\infty$ by interpolation.
\section{BMO estimate.}

Given a discrete group $G$, we denote by $({\cal L}(G),\tau)$ the group von Neumann algebra with its canonical trace $\tau$. Denote by $L^p(\hat G)$ the
associated noncommutative $L^p$ spaces, that is  the closure of ${\cal L}(G)$ w.r.t. the norm $\|x\|_p=(\tau |x|^p)^\frac1p$.  If $G$ is abelian, then ${  L}^p(\hat G)$ is the canonical $L^p$ space of functions on the dual group $\hat G$. In particular, if $G={\Bbb Z}$, then $\lambda_k=e^{ikt}, k\in {\Bbb Z}$ and ${  L}^p(\hat{\Bbb Z})=L^p({\Bbb T})$, the space of all $p$-integrable functions  on the unit circle. Please refer to \cite{PX03} for details on noncommutative $L^p$ spaces.

Given a conditionally negative length   $\psi$ on $G$,
Schoenberg's
theorem says that $$T_t: \lambda_g=e^{-t\psi(g)}\lambda_g$$ extends to a symmetric Markov semigroup of operators on the group von Neumann algebra $L^p(\hat G), 1\leq p\leq \infty$.
Following  \cite{JM12} and \cite{M08}, let us set
\begin{eqnarray}
\|x\|_{\mathrm{BMO}_c(\psi ) }&=&\sup_{0<t<\infty}\| T_{t }|x-T_{t}x|^2\| ^\frac12, \label{BMOT}
\end{eqnarray}
for $x\in L^2(\hat G)$. Let ${\mathrm{BMO}(\psi ) }$ be the space of all $ x\in L^2(\hat G)$ such that
\begin{eqnarray}
  \|x\|_{BMO(\psi)}=\max\{\|x\|_{ BMO_c(\psi)},\|x^*\|_{BMO_c(\psi)}\}<\infty.
\end{eqnarray}

 \begin{lemma} \label {JM12}([JM12]) We have the following interpolation result 
$$ [BMO(\psi),L^1(\hat G)]_{\frac{1}{p}} = L^p(\hat G) $$
for
 $1<p<\infty$.     
\end{lemma}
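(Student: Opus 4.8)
The plan is to prove the stated complex-interpolation identity by establishing the two continuous inclusions separately; the genuine content is the embedding $[BMO(\psi),L^1_0(\hat G)]_{1/p}\hookrightarrow L^p_0(\hat G)$, while the reverse inclusion is soft.

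For the soft inclusion $L^p_0(\hat G)\hookrightarrow[BMO(\psi),L^1_0(\hat G)]_{1/p}$, I would first record that by Schoenberg's theorem each $T_t$ is a unital, trace-preserving, positive $L^\infty$-contraction on $(\mathcal L(G),\tau)$; hence for $x\in L^\infty$ one has $\|x-T_tx\|_\infty\le 2\|x\|_\infty$ and, by positivity and contractivity, $\|T_t|x-T_tx|^2\|\le\||x-T_tx|^2\|_\infty\le 4\|x\|_\infty^2$, so that $\|x\|_{BMO_c(\psi)}\lesssim\|x\|_\infty$ and symmetrically for $x^*$. Thus the identity embeds $L^\infty_0(\hat G)$ into $BMO(\psi)$ boundedly. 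Since $[L^\infty_0(\hat G),L^1_0(\hat G)]_{1/p}=L^p_0(\hat G)$ is the classical Calder\'on identity for the semifinite couple over $(\mathcal L(G),\tau)$ (after the quotient by $\ker\psi$, the common fixed-point space of the $T_t$), functoriality of the complex method transfers it to the larger space $BMO(\psi)$, giving the inclusion at once.

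The hard inclusion is the analytic core and the step I expect to be the main obstacle. Conceptually it rests on Fefferman--Stein duality: one builds the Hardy space $H^1(\psi)$ adapted to the semigroup so that $BMO(\psi)=H^1(\psi)^*$, together with a sharp-maximal-function (or square-function) control of the $L^p$ norm. The difficulty is that the scalar good-$\lambda$ / John--Nirenberg argument underlying the commutative statement has no direct analogue once $\mathcal L(G)$ is noncommutative. The route I would follow is to dilate $(T_t)$ to a reversible Markov process on a larger von Neumann algebra---available here from the Gaussian/cocycle picture attached to the conditionally negative length $\psi$ via Schoenberg's correspondence---so that the semigroup BMO norm is comparable to the martingale BMO norm of an associated filtration. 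This reduces the assertion to the noncommutative martingale interpolation theorem $[BMO_{mart},L^1]_{1/p}=L^p$ of Junge--Musat, which supplies the full range $1<p<\infty$ in one stroke; transferring the resulting inequality back through the dilation, and checking that the comparison of the two BMO norms survives the quotient by $\ker\psi$, completes the proof. The most delicate points are the construction of the dilation with enough regularity to compare the BMO norms, and the verification that this comparison is two-sided on the relevant range---precisely the place where the specific structure coming from $\psi$ (rather than a generic Markov semigroup) is used.
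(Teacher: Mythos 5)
The paper offers no proof of this lemma at all---it is quoted as a black box from [JM12]---and your plan is precisely the strategy of that reference: the soft inclusion via $L^\infty_0(\hat G)\hookrightarrow BMO(\psi)$ plus Kosaki's interpolation identity, and the hard inclusion via a Markov dilation of $(T_t)$ built from the cocycle/Gaussian structure that Schoenberg's theorem attaches to $\psi$, comparison with martingale BMO, and the noncommutative martingale interpolation theorem of Musat/Junge--Musat, transferred back through the trace-preserving embedding. The only correction worth recording is that the ``two-sided'' comparison you single out as the delicate final step is not needed: the one-sided estimate $\|\pi_0(x)\|_{BMO_{\mathrm{mart}}}\lesssim\|x\|_{BMO(\psi)}$ suffices, since interpolating the couple map $(\pi_0,\pi_0)$ against the martingale theorem and using that $\pi_0$ is isometric on each $L^p$ already yields $\|x\|_p\lesssim\|x\|_{[BMO(\psi),L^1_0(\hat G)]_{1/p}}$.
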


\begin{lemma}\label{CS}
For $a_s\in {\Bbb R}_+$, $c_s,b_s\in B(H)$, we have, for any $0<p,q,r<\infty, \frac1p+\frac1q=\frac1r$,
\begin{eqnarray}
\|\sum_s a_sc^*_sb_s\|&\leq &\|\sum_s |c_s|^2 a_s\|^\frac12\|\sum_s |b_s|^2 a_s\|^\frac12\\
\|\sum_k a_sc^*_sb_s\|_{L^r}&\leq& \|\sum_s |c_s|^2 a_s\|_{L^p}^\frac12\|\sum_s |b_s|^2 a_s\|_{L^q}^\frac12.
\end{eqnarray}
\end{lemma}
\begin{proof} This is simply the Cauchy-Schwartz inequality.
\end{proof}
  
 \begin{theorem}
 Assume $(h_k)$ is a $\psi$-lacunary sequence. Then, for any $x=\sum_kc_k\la _  {h_k} $, we have
 \begin{eqnarray}
 \|x\|_{BMO(\psi)}^2\simeq^{ c_\delta}\max\{\|\sum_k |c_k|^2 \|,\|\sum_k |c_k^*|^2 \|\}.
 \end{eqnarray}
 \end{theorem}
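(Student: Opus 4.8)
The plan is to expand the semigroup BMO expression explicitly on the lacunary series and to isolate a diagonal part, which reproduces $\sum_k|c_k|^2$, from an off-diagonal part that is tamed by the lacunary gaps. Writing $a_k=a_k(t)=1-e^{-t\psi(h_k)}$, we have $x-T_tx=\sum_k a_kc_k\la_{h_k}$. Since the coefficients $c_k\in B(H)$ and the $\la_g$ act on different tensor factors of $B(H)\otimes\mathcal{L}(G)$ they commute, and a direct computation gives
\begin{equation*}
T_t|x - T_t x|^2 = \sum_{k,k'} a_k a_{k'}\, e^{-t\psi(h_k^{-1}h_{k'})}\, c_k^* c_{k'}\, \la_{h_k^{-1}h_{k'}}.
\end{equation*}
The diagonal $k=k'$ contributes $\sum_k a_k^2\,c_k^*c_k$, which is dominated in norm by $\sum_k|c_k|^2$ since $0\le a_k^2\le1$; the entire difficulty lies in the off-diagonal sum $O=P+P^*$, where $P$ is the part with $k<k'$.

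For the lower bound I would apply the trace-preserving conditional expectation $\mathcal E=\mathrm{id}\otimes\tau$ onto $B(H)$, which is unital and positive, hence contractive, and which annihilates every $\la_g$ with $g\ne e$. As the $h_k$ are distinct, $h_k^{-1}h_{k'}=e$ only when $k=k'$, so $\mathcal E\bigl(T_t|x-T_tx|^2\bigr)=\sum_k a_k^2\,c_k^*c_k$ and therefore $\|T_t|x-T_tx|^2\|\ge\|\sum_k a_k^2|c_k|^2\|$. Letting $t\to\infty$ makes $a_k^2\uparrow1$, so the supremum defining $\|x\|_{\mathrm{BMO}_c(\psi)}^2$ is at least $\|\sum_k|c_k|^2\|$; running the same argument on $x^*$ gives $\|\sum_k|c_k^*|^2\|$. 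This yields the lower estimate with absolute constant.

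The upper bound is the heart of the matter. I would split $P=\sum_{j\ge1}P_j$ according to the gap $j=k'-k$ and write $P_j=\sum_k\alpha_k^{(j)}u_k^*v_k$ with $u_k=c_k\la_{h_k}$, $v_k=c_{k+j}\la_{h_{k+j}}$ and $\alpha_k^{(j)}=a_k a_{k+j}\,e^{-t\psi(h_k^{-1}h_{k+j})}\ge0$. Because $|u_k|^2=|c_k|^2$ and $|v_k|^2=|c_{k+j}|^2$, Lemma 2 gives $\|P_j\|\le\|\sum_k\alpha_k^{(j)}|c_k|^2\|^{1/2}\|\sum_k\alpha_k^{(j)}|c_{k+j}|^2\|^{1/2}$, so it suffices to bound $\alpha_k^{(j)}$ uniformly in $k$ and $t$. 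Using $a_{k+j}\le1$, $a_k=1-e^{-t\psi(h_k)}\le t\psi(h_k)$, the second lacunarity inequality $\psi(h_k^{-1}h_{k+j})\ge\delta\,\psi(h_{k+j})$ and the first one $\psi(h_{k+j})\ge(1+\delta)^j\psi(h_k)$, one finds with $s=t\psi(h_k)$ and $\beta=\delta(1+\delta)^j$ that $\alpha_k^{(j)}\le s e^{-\beta s}\le\frac{1}{e\beta}=\frac{1}{e\delta}(1+\delta)^{-j}$. Since the $|c_k|^2$ are positive, this gives $\|\sum_k\alpha_k^{(j)}|c_k|^2\|\le\frac{1}{e\delta}(1+\delta)^{-j}\|\sum_k|c_k|^2\|$ (and similarly after reindexing), hence $\|P_j\|\le\frac{1}{e\delta}(1+\delta)^{-j}\|\sum_k|c_k|^2\|$. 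Summing the geometric series yields $\|O\|\le2\|P\|\le c_\delta\|\sum_k|c_k|^2\|$, and therefore $\|x\|_{\mathrm{BMO}_c(\psi)}^2\lesssim_\delta\|\sum_k|c_k|^2\|$.

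Finally, $\|x^*\|_{\mathrm{BMO}_c(\psi)}^2=\sup_t\|T_t(yy^*)\|$ with $y=x-T_tx$, and $yy^*$ has exactly the same shape, now with coefficients $c_kc_{k'}^*$ at group elements $h_kh_{k'}^{-1}$; the identical argument bounds it by $c_\delta\|\sum_k|c_k^*|^2\|$, using the lacunarity estimate for $\psi(h_kh_{k'}^{-1})$ (equivalently, that the inverse sequence $(h_k^{-1})$ is again $\psi$-lacunary). Taking the maximum then proves the stated equivalence. The step I expect to be the main obstacle is precisely the off-diagonal estimate: one must extract geometric decay in the gap $j$ that is uniform in both the scale $t$ and the index $k$, and this is exactly where the interplay of the two lacunarity conditions with the elementary bound $\sup_{s\ge0}s e^{-\beta s}=1/(e\beta)$ is essential.
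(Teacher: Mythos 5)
Your proof is correct in substance, but it organizes the key estimate genuinely differently from the paper. The paper computes the same expansion $T_t|x-T_tx|^2=\sum_{k,j}a_{k,j}(c_k\la_{h_k})^*c_j\la_{h_j}$ and then runs a Schur-type test on the full coefficient matrix: it shows $\sup_j\sum_k a_{k,j}\leq c_\delta$ and $\sup_k\sum_j a_{k,j}\leq c_\delta$ (splitting the sum at $t\psi(h_k)\leq 1$, using $1-e^{-s}\leq s$ on one side and the separation condition $\psi(h_k^{-1}h_j)\geq\delta\psi(h_k)$ plus geometric growth on the other), and then applies the Cauchy--Schwarz Lemma \emph{once} to the whole double sum, with no diagonal/off-diagonal splitting. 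You instead slice the off-diagonal part into bands $k'-k=j$, prove the uniform bound $\alpha_k^{(j)}\leq\frac{1}{e\delta}(1+\delta)^{-j}$ via $\sup_{s\geq0}se^{-\beta s}=1/(e\beta)$, apply Cauchy--Schwarz per band, and sum a geometric series. Both routes rest on Lemma 2 and yield $c_\delta\sim\delta^{-2}$; the paper's Schur test is more economical (one application of the lemma, no band bookkeeping), while your version makes the decay in the gap $j$ explicit and only needs sup-bounds on each band rather than row/column sums. Your lower bound via the conditional expectation $\mathrm{id}\otimes\tau$ and $t\to\infty$ is also a genuine addition: the paper dismisses the lower estimate as ``obvious,'' and your argument is exactly the right way to make it precise.

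One caveat, which your write-up shares with the paper's one-line ``taking the adjoint'': the row estimate for $x^*$ requires lower bounds on $\psi(h_kh_{k'}^{-1})$, whereas the hypothesis only controls $\psi(h_k^{-1}h_{k'})$; in a nonabelian group these are different elements, so your parenthetical claim that $(h_k^{-1})$ is ``again $\psi$-lacunary'' is not a restatement of the hypothesis and needs proof. It is true, but for a nontrivial reason: by Schoenberg's theorem $\psi(g)=\|b(g)\|^2$ for a $1$-cocycle $b$, so $\sqrt\psi$ (not $\psi$ itself, which can fail to be subadditive, e.g.\ $\psi(n)=n^2$ on $\Z$) is subadditive; combined with the growth condition this gives, for $k<k'$,
\begin{equation*}
\sqrt{\psi(h_kh_{k'}^{-1})}\;\geq\;\sqrt{\psi(h_{k'})}-\sqrt{\psi(h_k)}\;\geq\;\bigl(1-(1+\delta)^{-1/2}\bigr)\sqrt{\psi(h_{k'})},
\end{equation*}
hence $\psi(h_kh_{k'}^{-1})\geq\delta''\max\{\psi(h_k),\psi(h_{k'})\}$ with $\delta''=(1-(1+\delta)^{-1/2})^2$, and your band argument then runs verbatim for $x^*$ with $\delta$ replaced by $\min\{\delta,\delta''\}$. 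Adding this one line closes the only real gap in your proposal.
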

\begin{proof} An easy calculation shows that
\begin{eqnarray*}
 T_t|x -T_tx|^2
  = \sum _{k,j}  a_{k,j}(c_k\la_{h_k})^*c_j\la_{h_j},
 \end{eqnarray*}
with $$a_{k,j}= e^{-t\psi(h^{-1}_kh_j )}(1-e^{-t\psi(h_k^{-1})})(1-e^{-t\psi(h_j)})\geq0.$$
By the lacunary property $\psi(h_k^{-1}h_j)\geq |\psi(h_k)-\psi(h_j)|$, we have
\begin{eqnarray*}
  \sum _{k}  a_{k,j}
  &\leq &\sum _{t\psi(h_k)\leq 1} (1-e^{-t\psi(h_k^{-1})})    +\sum _{t\psi(h_k)> 1}   e^{-t\psi(h^{-1}_kh_j )} \\ 
  &\leq &\sum _{t\psi(h_k)\leq 1}  t  \psi(h_k)   +\sum _{t\psi(h_k)> 1}    e^{- t \delta\psi(h_k)}  \\
  &\leq & 1+\delta^{-1}+\frac1{1-e^{-  {\delta^2} }}
   \leq c _\delta.  
 \end{eqnarray*}
We then get $\sup_j \sum_k a_{k,j}\leq c_\delta$. Similarly, $\sup_k \sum_j a_{k,j}\leq c_\delta.$  By Lemma \ref{CS}, we have
\begin{eqnarray*}
 \|T_t|x-T_tx|^2\|
  &\leq &\|\sum _{k,j} |c_k|^2a_{k,j}\|^\frac12 \|\sum_{k,j}|c_j|^  2a_{k,j}\|^\frac12\\
    &\leq&c_\delta  \|\sum_k |c_k|^2 \|.
 \end{eqnarray*}
 Taking supremum on $t$, we get $\|x\|^2_{BMO_c }\leq c_\delta \|\sum_k |c_k|^2 \|.$ Taking the adjoint, we prove the upper estimate. The lower estimate is obvious by taking conditional expectation $\tau$ and sending $t$ to $\infty$.
 \end{proof}

Given a length-lacunary sequence $h_k\in G$,  define the linear map $T$ from $L^\infty(\ell_2)$ to $BMO$ by 
 $$T((c_k))=\sum_k c_k\la_{h_k}.$$
 Then $T$ has a norm $c_\delta$ from $L^\infty(\ell_2)$ to $BMO$ and norm 1 from  $L^2(\ell_2)$ to $L^2(\hat G).$
By the interpolation result Lemma \ref{JM12}, we get 
\begin{corollary} 
 Assume $(h_k)$ is a $\psi$-lacunary sequence for some conditionally negative $\psi$. We have that, for any $p>2, x=\sum_kc_k\la _  {h_k} $, 
\begin{eqnarray}
 \|x\|_{p}^2\leq c^{\frac {p-2}p}_\delta p^2\max\{\|\sum_k |c_k|^2 \|_{\frac p2},\|\sum_k |c_k^*|^2 \|_{\frac p2}\}.
 \end{eqnarray}
 By duality, we get, for any $1<p<2$,
 \begin{eqnarray}
 \inf\{\|\sum_k |a_k|^2 \|_{\frac p2}+\|\sum_k |b_k^*|^2 \|_{\frac p2};c_k=a_k+b_k\} \lesssim \inf\{\|x\|^2_p; \hat x(h_k)=c_k\}.\label{10}
 \end{eqnarray}
 \end{corollary}
 
 \begin{remark} Corollary 1 is proved in \cite{JMX06} page 118 with a worse constant.
    If, $G=\F_n,\psi $ is the reduced word length, it is also easy to verify that $\psi$-lacunary set is $B(2)$ in the sense of W. Rudin, so it is a $\Lambda_4$ set by Harcharras's work\cite{Ha99}. This does not seem clear  for $B(p)$ with $p>2$.   
 \end{remark}
 \begin{remark}  The sequence of free generators $\{g_i,i\in {\Bbb N}\}$ of $\F_\infty$ is a $\psi$-lacunary sequence for some $\psi$. Indeed, let $\pi$ be the group homomorphism on $\F_\infty$ sending $g_i$ to $g_{i}^{2^i}$. Then $\psi(h)=|\pi(h)|$ is a conditionally negative function.
 \end{remark}

   \begin{remark}  One can extend (\ref{10})  to the range $0<p\leq1$ as a Khintchine-type inequality
    \begin{eqnarray}
 \inf\{\|\sum_k |a_k|^2 \|_{\frac p2}+\|\sum_k |b_k^*|^2 \|_{\frac p2};c_k=a_k+b_k\} \lesssim \|\sum_k c_k\la_{h_k} \|_{\frac p2}^2.
 \end{eqnarray}
   following Piser-Ricard's argument \cite{PR17}. For the $p=1$ case, one may follow Haagerup-Musat's argument to get a better constant.
\end {remark}

We will prove a column version of (\ref{10}) in the next section.
  \section{$H^1$-Estimate}

   \begin{lemma}\label{Lemma}
   Let $x=\sum_k c_k\la_{h_k}\in L^2(\hat G)$, Then, we have
 \begin{eqnarray}
 \tau( \int_0^\infty|\partial_s T_sx|^2sds)^\frac12&\leq &\frac12 (\sum_k |c_k|^2 )^\frac12. \label{H1<}
 \end{eqnarray}
Moreover, if we assume $(h_k)$ is a $\psi$-lacunary sequence, then
\begin{eqnarray}
  \| \int_0^\infty|\partial_s T_sx|^2sds\|&\leq &c_\delta\|\sum_k |c_k|^2  \|.
 \end{eqnarray}
 \end{lemma}

\begin{proof} An elementary calculation shows that
\begin{eqnarray*}
 \int_0^\infty|\partial_s T_sx|^2sds&=&\sum _{k,j}   (c_k\la_{h_k})^*c_j\la_{h_j}\psi(h_i)\psi(h_j)\int_0^\infty e^{-s(\psi(h_k)+\psi(h_j))}sds\\
  &=& \sum _{k,j}  a_{k,j}(c_k\la_{h_k})^*c_j\la_{h_j},
 \end{eqnarray*}
with $$a_{k,j}= \frac {\psi(h_k)\psi(h_j)}{(\psi(h_k)+\psi(h_j))^2}\geq0$$ since $ \int_0^\infty e^{-t}tdt=1$.
So
\begin{eqnarray*}
   \tau(\int_0^\infty|\partial_s T_sx|^2sds)^\frac12 &\leq& (\tau\int_0^\infty|\partial_s T_sx|^2sds)^\frac12\\
  &= &(\sum_{k}|c_k|^  2a_{k,k})^\frac12=\frac12  (\sum_k |c_k|^2)^\frac12.
   \end{eqnarray*}
On the other hand, it is easy to see that 
\begin{eqnarray*}
  \sup_j \sum_k a_{k,j}\leq c_\delta, \ \ \sup_k \sum_j a_{k,j}\leq c_\delta.
\end{eqnarray*}
 Applying Lemma 2 for $p=q=\infty$, we have \begin{eqnarray*}
   \|\int_0^\infty|\partial_s T_sx|^2sds\|
  &\leq &\|(\sum _{k,j} |c_k|^2a_{k,j})^\frac12\| \|(\sum_{k,j}|c_j|^  2a_{k,j})^\frac12\|\\
  &\leq& c_\delta  \|\sum_k |c_k|^2\|.
 \end{eqnarray*}

 \end{proof}
 
   \begin{theorem}
 Assume $(h_k)$ is a $\psi$-lacunary sequence. Then, we have
 \begin{eqnarray}
 tr (\sum_k |c_k|^2 )^\frac12\simeq^{ c_\delta}\inf\{\tau\otimes tr( \int_0^\infty|\partial_s T_sx|^2s)^\frac12 ; \tau (x \la^*_{ h_k})=c_k\}.
 \end{eqnarray}
 \end{theorem}

\begin{proof}  
By duality, we may choose $b_k$ such that $\|\sum |b_k|^2\|=1$ and
\begin{eqnarray*}tr (\sum_k |c_k|^2 )^\frac12= tr\sum c^*_kb_k=(\tau\otimes tr) x^*y,
  \end{eqnarray*} 
with $y=\sum b_k\la_{h_k}$ and any (finite) Fourier sum $x$ such that $\tau (x\la^*_{h_k})=c_k$.
We then have
\begin{eqnarray*}
(\tau\otimes tr)x^*y&=&4\tau\otimes tr \int_0^\infty \partial_s T_sx^*\partial_s T_sy sds\\
 &\leq& 4\tau\otimes tr (\int_0^\infty|\partial_s T_sx|^2sds)^\frac12\|\int_0^\infty|\partial_s T_sy|^2sds\|^\frac12
\end{eqnarray*}
 Combining the above estimates with Lemma \ref{Lemma}, we obtain \begin{eqnarray*}
 tr (\sum_k |c_k|^2 )^\frac12\leq 4c_\delta \tau\otimes tr(\int_0^\infty|\partial_s T_sx|^2sds)^\frac12.
    \end{eqnarray*}
    The other direction follows by taking $tr$ on both sides of (\ref{H1<}).
 \end{proof}
\section{Large $\Lambda_\infty$ sets on $\F_2$}

We call a subset $A\in G$ is completely Sidon, if $\{\lambda_h, h\in A\}$ is  completely unconditional in ${\mathcal L}(\hat G)$, i.e. there exists a constant $C_A$ such that
$$\|\sum_{h_k\in A} \eps_k c_k\lambda_{h_k}\|\leq C_A\|\sum_{h_k\in A}  c_k\lambda_{h_k}\|,$$
for any  choice $\eps_k=\pm$, $c_k\in B(H)$.
We call a subset $A\in G$ is completely $\Lambda_\infty$, if there exists a constant $C_A$ such that
\begin{eqnarray}\label{lainfty}
\|\sum_{h_k\in A}  c_k\lambda_{h_k}\|\leq C_A\max\{\|\sum_{h_k\in A}  |c_k|^2\|^\frac12,\|\sum_{h_k\in A}  |c_k^*|^2\|^\frac12\},\end{eqnarray}
for any  choice of finite many $c_k\in B(H)$. 
We say $A$ is completely $\Lambda_{bmo,\psi}$ if  we take  the BMO$(\psi)$-norm on  the left hand side of (\ref{lainfty}).
Obviously, a completely $\Lambda_\infty$ set is completely $\Lambda_{bmo,\psi}$ for any $\psi$, and is completely Sidon.

  Let ${\mathcal P}_d$ (${\mathcal P}_{\leq d}$)   be the collection of all reduced words of $\F_n$ with   length $=d$ (${\leq d}$).
  When $G={\Bbb Z}$,  a   classical theory of Rudin says that, for any Sidon set $A$ of $\F_1$, we have $\# (A\cap {\mathcal P}_{\leq d})\lesssim \log \# {\mathcal P}_{\leq d}$, and  
  Leli\`evre (\cite{Le97}) prove that every $\Lambda_{bmo,|\cdot|}$  is a finite combination of Hadamard lacunary sets, thus a Sidon set.

Fix a generating set $S=\{g_k,k\in{\Bbb Z}_*\}$ of $\F_\infty$, with the convention that $g_k^{-1}=g_{-k}$. Let ${\mathcal Q}_n\subset \F_\infty$ be the collection of symmetric words of length $2n$, 
 $${\mathcal Q}_n=\{g_{k_1}g_{k_2}\cdots g_{k_n} g_{k_n}\cdots g_{k_2}g_{k_1}; |g_{k_j}|=1, g_{k_j}\neq g_{k_{j+1}}^{-1}, k_j\in {\Bbb Z}_*\}.$$
 
 The following Proposition is the key observation for our example. We include a proof although this  maybe obvious for  experts. 

 \begin{prop} ${\mathcal Q}_n$ is   a free subset of $\F_\infty$.
 \end{prop}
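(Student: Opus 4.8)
The plan is to show that $\mathcal{Q}_n$ is a free basis by establishing that no nontrivial reduced word in its elements collapses in $\F_\infty$ — i.e. that $\mathcal{Q}_n$ is a Nielsen-reduced set. I would exploit the palindromic shape of the words: every $w\in\mathcal{Q}_n$ is a reduced word of length $2n$ of the form $w=g_{k_1}\cdots g_{k_n}g_{k_n}\cdots g_{k_1}$, with a distinguished central block $g_{k_n}^2$ sitting at positions $n,n+1$ (this block is reduced precisely because $k_n\neq 0$). A preliminary observation is that $\mathcal{Q}_n$ is stable under inversion, since $w^{-1}=g_{-k_1}\cdots g_{-k_n}g_{-k_n}\cdots g_{-k_1}$ again satisfies the defining constraints; thus every $w^{\pm1}$ appearing in a product already lies in $\mathcal{Q}_n$, and I only ever need to analyze products $uv$ with $u,v\in\mathcal{Q}_n$.

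The heart of the matter is a cancellation estimate: \emph{for $u,v\in\mathcal{Q}_n$ with $uv\neq e$, at most $n-1$ letters cancel in the product $uv$}. To prove it, write $u=g_{a_1}\cdots g_{a_n}g_{a_n}\cdots g_{a_1}$ and $v=g_{b_1}\cdots g_{b_n}g_{b_n}\cdots g_{b_1}$. Reading $u$ backwards gives the same letter sequence as reading it forwards, so the junction cancellation pairs the last letter $g_{a_1}$ of $u$ with the first letter $g_{b_1}$ of $v$, then $g_{a_2}$ with $g_{b_2}$, and so on; cancellation proceeds through the $j$-th step exactly when $b_i=-a_i$ for all $i\le j$. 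The crucial point — and this is where the palindromic structure is indispensable — is the dichotomy at the center: if cancellation reaches the $n$-th pair (so $b_i=-a_i$ for all $i\le n$), then the two central letters $g_{a_n}$ and $g_{b_n}=g_{-a_n}$ also cancel, and the process unravels symmetrically through the two remaining halves, forcing $v=u^{-1}$ and hence $uv=e$. So as soon as $uv\neq e$ the cancellation must have halted strictly before the center, at some step $\le n-1$.

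Granting this estimate, the central block of each factor survives every reduction, and I would conclude by a short induction on the number of factors. Take a reduced word $w_1^{\eps_1}\cdots w_r^{\eps_r}$ and set $W_i:=w_i^{\eps_i}\in\mathcal{Q}_n$; reducedness gives $W_{i+1}\neq W_i^{-1}$, so $W_iW_{i+1}\neq e$ and at most $n-1$ letters cancel at each junction. I claim the reduced form of $P_i:=W_1\cdots W_i$ ends in a suffix of $W_i$ of length at least $n+1$. This holds for $i=1$; for the step, the isolated cancellation between $W_i$ and $W_{i+1}$ involves only the last $\le n-1$ letters of $W_i$, which lie inside the surviving suffix of length $\ge n+1$, so the junction cancellation in the chain equals the isolated one and leaves a suffix of $W_{i+1}$ of length $\ge 2n-(n-1)=n+1$. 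In particular $P_r\neq e$, which shows $\mathcal{Q}_n$ is free. The only delicate step is the cancellation estimate — specifically verifying the all-or-nothing behavior at the central block; the subsequent induction is then routine bookkeeping.
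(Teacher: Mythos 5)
Your proof is correct and follows essentially the same route as the paper's: your all-or-nothing cancellation dichotomy at the central block is exactly the paper's key fact that $|hg|>|g|$ iff $g\notin L_{(h^r)^{-1}}$ (and then $hg\in L_{h^l}$), combined with the observation that for these palindromic words the equality $x_j^l=(x_{j+1}^r)^{-1}$ forces $x_{j+1}=x_j^{-1}$. The two inductions coincide up to bookkeeping — the paper multiplies on the left and tracks a surviving prefix (the left half of the newest factor), while you multiply on the right and track a surviving suffix of length $n+1$.
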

\begin{proof} Let us first introduce a few notations. Given a reduced word $h\in \F_\infty$, we denote by $L_h$ the subset of all reduced words $g$ that start with $h$, that is $L_h=\{g\in\F_\infty; |g|\geq|h|, |{h^{-1}}g|=|g|-|h|\}$.
Suppose $h\in {\mathcal P}_{2n}$, denote by $h^l,h^r$ the left half  and   the right half of $h$, i.e. the reduced words in ${\mathcal P}_{n}$ such that $h=h^lh^r$.  
 We will use the fact, that  the condition  $|hg|> |g|$ holds iff $g \notin L_{ (h^r)^{-1}}$  and implies that $hg\in L_{h^l}$. 
 
 Given any $m$ elements  $x_j\in {\mathcal Q}_n, 1\leq j\leq m $ such that $x^{-1}_k\neq x_{k+1}$ for any $1\leq k<m$, it is obvious that  $|x_2x_1 |>|x_1 | $.    Assume  $|x_j \cdots x_2x_1|>|x_{j-1} \cdots x_2x_{1}|$. That is $|x_jg|> |g|$ for $g=x_{j-1} \cdots x_2x_{1}$. Then $x_jg\in L_{x_j^l}$. So $g'=x_jg\notin L_{(x_{j+1}^r)^{-1}}$ since $x_j\neq x_{j+1}^{-1}$. Then $|x_{j+1}g'|>|g'|$. We then get $|x_j\cdots x_2x_1|>|x_{j-1} \cdots x_2x_{1}|$ for all $1<j\leq m$ by induction. Therefore, $x_m \cdots x_2x_1 \neq e$.
We then conclude that ${\mathcal Q}_n$ is a free set.  
  \end{proof}
   Now, for $x=\sum_{h\in {\mathcal Q}_n} c_h\lambda_h $ with $h\in{\mathcal Q}_n$ and $c_h\in B(H)$, we have by Haagerup and Pisier's inequality (\cite{HaPi93})that
\begin{eqnarray}\label{HaO}\|x\|\leq  2\max\{   \|\tau |x|^2\|^\frac12, \|\tau|x^*|^2\|^\frac12 \}. \label{HaPi}
 \end{eqnarray}


 \noindent{\bf Example.}  
Let $\pi$ be  the group homomorphism from $\F_\infty$ into $\F_2$ with free generators $a,b$, such that
 $$\pi (g_k)=a^{k}ba^{-k}, k\in{\Bbb N}.$$
 By (\ref{HaPi}),  $\pi({\mathcal Q}_n)$ is a complete $\Lambda_\infty$ set of $\F_2$ for each $n\in \N$. Therefore, it is completely Sidon, and is completely $\Lambda_{bmo(|\cdot|)}$ with $|\cdot|$ the word length on $\F_2$. However,  $\pi({\mathcal Q}_n)$  is not a finite union of  $|\cdot|$-lacunary set, contrary to Leli\'evre's theorem  for  $\F_1$,
In fact, it is easy to see that  
$\# (\pi({\mathcal Q}_n)\cap {\mathcal P}_{\leq 2nm})\simeq m^n$ while 
$\log \# {\mathcal P}_{\leq 2nm}\simeq nm$ as $m\rightarrow \infty$. 

\begin{remark} Let $\phi$ be  an injection  from ${\Bbb N}  $ to ${\Bbb N} $. Let  $\phi (k)=-\phi (-k )$ for $k<0$.   (\ref{HaPi})  holds for 
$${\mathcal Q}_n=\{g_{k_1}g_{k_2}\cdots g_{k_n} g_{\phi (k_{n})}\cdots g_{\phi(k_2)}g_{\phi(k_1)}; |g_{k_j}|=1, g_{k_j}\neq g_{k_{j+1}}^{-1} \} $$ as well.
\end{remark}
\begin{remark} Suppose $A$ is a completely Sidon set. Does there exist a conditionally negative $\psi$, so that $\#(A\cap {\cal P}^\psi_n)\leq \log \#  {\cal P}^\psi_n$?
\end{remark}
\bibliographystyle{amsplain}

\vskip30pt


\bigskip
\hfill \noindent \textbf{Tao Mei} \\
\null \hfill Department of Mathematics
\\ \null \hfill Baylor University \\
\null \hfill One bear place, Waco, TX  USA \\
\null \hfill\texttt{tao\_mei@baylor.edu}

\end{document}